\newtheorem{theorem}{Theorem}
\def\BState{\State\hskip-\ALG@thistlm}
\definecolor{dkgreen}{rgb}{0,0.6,0} \definecolor{gray}{rgb}{0.5,0.5,0.5} \definecolor{mauve}{rgb}{0.58,0,0.82} \lstset{frame=tb, language=Python, aboveskip=3mm, belowskip=3mm, showstringspaces=false, columns=flexible, basicstyle={\small\ttfamily}, numbers=none, numberstyle=\tiny\color{black}, keywordstyle=\color{red}, commentstyle=\color{gray}, stringstyle=\color{orange}, breaklines=true, breakatwhitespace=true, tabsize=3 }
\title{On the Classification and Algorithmic Analysis of \\Carmichael Numbers}
\author{Sathwik Karnik}
\date{}
\begin{document}
\large
\newenvironment{proof}{\paragraph{Proof:}}

\maketitle
\begin{abstract}



In this paper, we study the properties of Carmichael numbers, false positives to several primality tests. We provide a classification for Carmichael numbers with a proportion of Fermat witnesses of less than $50\%$, based on if the smallest prime factor is greater than a determined lower bound. In addition, we conduct a Monte Carlo simulation as part of a probabilistic algorithm to detect if a given composite number is Carmichael. We modify this highly accurate algorithm with a deterministic primality test to create a novel, more efficient algorithm that differentiates between Carmichael numbers and prime numbers. 
\end{abstract}

\clearpage
\tableofcontents
\clearpage
\section{Introduction}


In recent years, cybersecurity has been an issue because of insecure cryptosystems. Primality testing is an important step in the implementation of the RSA cryptosystem. In the search for time-efficient primality tests, composite numbers have been inadvertently selected for key generation, rendering the system fatally vulnerable \cite{pinch_1997}. Carmichael numbers are false positives to several primality tests, including the Fermat test and the Miller-Rabin test \cite{Pinch1993SomePT}. This paper provides both a classification of Carmichael numbers and a novel, highly accurate algorithm that detects Carmichael numbers. 

Section 2 of this paper provides the necessary background for studying the proportion of Fermat witnesses for Carmichael numbers. Furthermore, Section 2 concludes with the observation that many Carmichael numbers have a proportion of Fermat witnesses of less than $50\%.$ 

The results pertaining to the classification of Carmichael numbers with a proportion of Fermat witnesses of less than $50\%$ are detailed in Section 3.1. This classification provides a lower bound for the smallest prime factor of certain Carmichael numbers with a proportion of Fermat witnesses of less than $50\%$ using both inequalities from the initial observation and Newton's method for approximating the root of a function.

The observation made in Section 2.4 served as the motivation for creating an algorithm that differentiates between Carmichael numbers and other composite numbers. Section~3.2 discusses this algorithm, which uses a Monte Carlo simulation to check if a composite number is Carmichael with a certain high probability. The proof of this algorithm and its probability of correctness are detailed in Sections 3.3 and 3.4, respectively. In addition, Section 3.6 provides a modified version of this algorithm that allows for the detection of Carmichael numbers among both composite numbers and prime numbers. The proof of this modified algorithm and its probability of correctness are detailed in Sections 3.7 and 3.8, respectively.

Sections 3.5 and 3.9 analyze the efficiencies of the first algorithm and the modified version. The first algorithm has a run-time of $O(t(\log n)^3)$, where $n$ is the number that is tested and $t$ is the sample size of the number of integers selected in the random sample. The run-time of the second algorithm is $O\left(nt(\log n)^3 + \left(\dfrac{n}{\log n}+C(n)\right)\cdot x \right)$, where $x$ is the run-time of the deterministic primality test that is combined with the original algorithm. Detailed analyses of these efficiencies are provided in Sections 3.5 and 3.9.

\section{Background}

\subsection{Primality Testing}

The RSA algorithm requires two large prime numbers, $p$ and $q,$ from which the keys are generated. To determine if a randomly generated large number $n$ is prime, deterministic primality tests (tests with $100\%$ accuracy) may seem to be the primary option. However, even the fastest known deterministic tests, such as the Agrawal-Kayal-Saxena primality test (or the AKS test), have a run-time of $O((\log n)^6)$, where $n$ is the number that is tested for primality \cite{Klappenecker2002TheAP}. Thus, more efficient primality testing algorithms that maintain a high accuracy are needed. Many practical primality tests for larger numbers are \textit{probabilistic}. In probabilistic primality tests, either (1) a positive integer $n$ is determined to be composite (with $100\%$ accuracy) or (2) the integer $n$ is determined to be prime with a certain probability. To maximize the probability that the primality test works correctly, one must conduct a Monte Carlo simulation so that the chance that $n$ is incorrectly shown to be prime is strictly less than a predetermined value. 

\subsection{Fermat Test}

The Fermat test is a probabilistic primality test that utilizes notions from Fermat’s little theorem \cite{Pinch1993SomePT}. In the Fermat test, a random number $ a $ is chosen from $ (\mathbb{Z}/n \mathbb{Z})\backslash\{0\}$. The test then checks if $ a^{n-1}\equiv 1\pmod{n}$. If $ a^{n-1}\not \equiv 1\pmod{n}$, then $n$ is not a prime number. Otherwise, if $ a^{n-1}\equiv 1 \pmod{n}$, then $ n $ is said to be prime with a certain probability. In particular, there are some composite numbers $n$ for which there exists an $a\in  (\mathbb{Z}/n\mathbb{Z})\backslash\{0\}$ such that $a^{n-1} \equiv 1 \pmod{n}$; one such composite number is $n=561=3\cdot11\cdot17$. In this case, if $a=2$, $a^{n-1}\equiv2^{560}\equiv 1\pmod{561}$. After randomly selecting an element of $(\mathbb{Z}/n\mathbb{Z})\backslash\{0\}$ and calculating $a^{n-1} \pmod{n}$, $ n=561 $ turns out to be a false positive for the Fermat test. One large class of such false positives is Carmichael numbers, which have the property that for all $a\in (\mathbb{Z}/n\mathbb{Z})^{\times}$, $a^{n-1}\equiv 1 \pmod{n}$.




Consider the set of all $a$ in $\{1, 2, 3,\ldots, n-1\}$ for which $a^{n-1} \not\equiv 1 \pmod{n}$. Such values for $a$ are called \textit{Fermat witnesses} for the Fermat primality test because these values of $a$ show that $n$ is not a prime number. Table \ref{tab:fermattest} shows $a^{n-1}\pmod{n}$ for all $a\in (\mathbb{Z}/n\mathbb{Z})\backslash\{0\}$ in the case when $n=21.$ 
\renewcommand{\arraystretch}{1.2}
\begin{table}[H]
  \caption{Fermat Test for $n=21$}
  \label{tab:fermattest}
  
  \centering
  \resizebox{\textwidth}{!}{
  \begin{tabular}{c|c|c|c|c|c|c|c|c|c|c|c|c|c|c|c|c|c|c|c|c}
  \Large $a$ & \Large 1 & \Large {\color{blue} 2} & \Large {\color{blue} 3} & \Large {\color{blue} 4} & \Large {\color{blue}5} & \Large {\color{blue}6} & \Large {\color{blue}7} & \Large 8 & \Large {\color{blue} 9} & \Large {\color{blue}10} & \Large {\color{blue}11} & \Large {\color{blue}12} & \Large 13 & \Large {\color{blue}14} & \Large {\color{blue}15} & \Large {\color{blue}16} & \Large {\color{blue}17} & \Large {\color{blue}18} & \Large {\color{blue}19} & \Large 20 \\
    \hline \Large $a^{n-1}\pmod{n}$ & \Large 1 & \Large 4 & \Large 9 & \Large 16 & \Large 4 & \Large 15 & \Large 7 & \Large 1 & \Large 18 & \Large 16 & \Large 16 & \Large 18 & \Large 1 & \Large 7 & \Large 15 & \Large 4 & \Large 16 & \Large 9 & \Large 4 & \Large 1 \\
  \end{tabular}
  }
\end{table}

In Table \ref{tab:fermattest}, the values of $ a $ that are Fermat witnesses are colored in blue, and for those values, $a^{n-1}\equiv a^{20} \not\equiv 1 \pmod{21}. $ In the $20$-element set $\{1, 2, 3,\ldots, 20\}$, $16$ elements are Fermat witnesses. In other words, for $n=21,$ the proportion of Fermat witnesses is $80\%.$ A number $a$ is defined to be a \textit{non-trivial} Fermat witness if $\gcd(a,n)=1$ and $a^{n-1}\not \equiv1~\pmod{n}.$ Note that $a$ would be considered a \textit{trivial} Fermat witness if $\gcd(a,n)>1$ because $a$ would not be an element of $(\mathbb{Z}/n\mathbb{Z})^{\times}$, which implies that $a^{n-1} \not\equiv 1\pmod{n}$. It has been shown that for $n\in \mathbb{N},$ if there exists a non-trivial Fermat witness, then the proportion of Fermat witnesses is greater than $50\%$ (see Theorem 3.5.4 of \cite{pubkey}). The proof of this claim uses the idea of three disjoint subsets ($A, B,$ and $C$) that categorize all integers in the set $ \{1, 2, 3,\ldots, n-1\} $: \begin{itemize} \item{$A = \{1 \leq a \leq n-1: a^{n-1} \equiv 1\pmod{n} \}$} \item{$ B=\{1\leq a\leq n-1: \gcd(a,n)=1 \text{ and } a^{n-1}\not\equiv 1\pmod{n} \} $} \item{$C=\{1\leq a\leq n-1: \gcd(a,n)>1\}$}\end{itemize}

Composite numbers with no non-trivial Fermat witnesses (equivalently, $|B|=0$) are called \textit{Carmichael numbers}, which are further detailed in Section 2.3.

\subsection{Carmichael numbers}
Carmichael numbers are composite numbers $n$ with the property that for all $a\in \mathbb{N}$ such that $\gcd(a,n)=1,$ $ a^{n-1}\equiv 1\pmod{n}.$ The Fermat test is vulnerable because there are infinitely many Carmichael numbers \cite{alford_granville_pomerance_1994}. 

Carmichael numbers obey Korselt's criterion, which is the equivalent condition to a composite number $n$ being Carmichael \cite{Alford1982ThereAI}. Korselt's criterion states that a composite number $n$ is Carmichael if and only if the following are true:

(i) the number $n$ does not have a square factor greater than $1$

(ii) for all prime factors $p$ of $n$, $(p-1)\vert (n-1). $ 

Suppose $n=(6m+1)(12m+1)(18m+1), $ where $(6m+1)$, $(12m+1)$, and $(18m+1)$ are prime numbers. It is not difficult to show that $6m\vert (n-1),$ $12m\vert (n-1)$, and $18m\vert (n-1) $ because $n-1=(6m+1)(12m+1)(18m+1)-1=1296m^3+396m^2+36m+1-1$=$1296m^3+396m^2+36m$ \cite{dartmouth}. Thus, by Korselt's criterion, such $n$ is Carmichael.

Carmichael numbers are important to study and classify because of their significant role in primality tests. By understanding the importance of Carmichael numbers, cryptographers and number theorists can modify primality tests in a way that Carmichael numbers can be easily identified.

\subsection{Fermat Witnesses for Carmichael Numbers}

Let $a$ be an element of $(\mathbb{Z}/n\mathbb{Z})\backslash\{0\}.$ Recall that $a$ is a Fermat witness for a Carmichael number $n$ if and only if $\gcd(a,n)>1.$ The proportion of Fermat witnesses for Carmichael numbers is an important subject for investigation because it determines the probability that Carmichael numbers will be correctly determined to be composite numbers. Because $\phi(n)=\vert \{a\in (\mathbb{Z}/n\mathbb{Z})\vert \gcd(a,n)=1 \}\vert $, the proportion of Fermat witnesses for Carmichael number is given by $1-\dfrac{\phi(n)}{n-1}.$

It is important to consider a few small examples of the proportion of Fermat witnesses for Carmichael numbers. For the Carmichael number $n=561,$ the proportion of Fermat witnesses is equal to $1-\dfrac{\phi(n)}{n-1}=1-\dfrac{320}{560}\approx0.4286.$ For the Carmichael number $n=1105,$ the proportion of Fermat witnesses is equal to $1-\dfrac{\phi(n)}{n-1}=1-\dfrac{768}{1104}\approx0.3043.$ For the Carmichael number $n=1729,$ the proportion of Fermat witnesses is equal to $1-\dfrac{\phi(n)}{n-1}=1-\dfrac{1296}{1728}\approx0.2504.$

The examples above seem to suggest that the rate of Fermat witnesses is less than 50\% for all Carmichael numbers. However, this conjecture is not correct; Table \ref{tab:greater} lists all Carmichael numbers less than $10^{21}$ with the property that $1-\dfrac{\phi(n)}{n-1}$ is greater than 50\% \cite{Pinch2008TheCN}. Although the rate of Fermat witnesses for Carmichael numbers is not bounded above by $50\%,$ the observations pertaining to the rate of Fermat witnesses for Carmichael numbers are essential to the creation of the algorithms detailed in this paper.

\renewcommand{\arraystretch}{0.7}
\begin{table}[H]
\centering
\caption{Proportion of Fermat Witnesses is Greater Than $50\%$ for Certain Carmichael Numbers}
\label{tab:greater}
\begin{tabular}{>{\centering\arraybackslash}m{1in}|>{\RaggedLeft}p{3.95cm}|p{6.97cm}}
\hline
$1-\dfrac{\phi(n)}{n-1} \text{ } (\%)$ & $\text{Carmichael Number } n$ & Prime factors of $n$\\ \hline
50.04 & 3,852,971,941,960,065 & 3 · 5 · 23 · 89 · 113 · 1409 · 788,129\\

50.10 & 655,510,549,443,465 & 3 · 5 · 23 · 53 · 389 · 2,663 · 34,607\\
50.21 & 13,462,627,333,098,945 & 3 · 5 · 23 · 53 · 197 · 8,009 · 466,649
\\
50.25 & 26,708,253,318,968,145 & 3 · 5 · 17 · 113 · 57,839 · 16,025,297\\
50.76 & 26,904,099,2399,565 & 3 · 5 · 23 · 29 · 4,637 · 5,799,149\\
50.79 & 158,353,658,932,305 & 3 · 5 · 17 · 89 · 149 · 563 · 83,177
\\
50.89 & 1,817,671,359,979,245 & 3 · 5 · 23 · 29 · 359 · 11027 · 45,893\\
51.72 & 16,057,190,782,234,785 & 3 · 5 · 17 · 29 · 269 · 6089 · 1,325,663\\
51.76 & 75,131,642,415,974,145 & 3 · 5 · 23 · 29 · 53 · 617 · 9,857 · 23,297\\
51.95 & 881,715,504,450,705 & 3 · 5 · 17 · 47 · 89 · 113 · 503 · 14,543
\\
52.01 & 31,454,143,858,820,145 & 3 · 5 · 17 · 23 · 2,129 · 39,293 · 64,109
\\
52.13 & 6,128,613,921,672,705 & 3 · 5 · 17 · 23 · 353 · 7,673 · 385,793
\\
52.34 & 12,301,576,752,408,945 & 3 · 5 · 23 · 29 · 53 · 113 · 197 · 1,042,133
\\ 
52.70 & 1,886,616,373,665 & 3 · 5 · 17 · 23 · 83 · 353 · 10,979
\\ 
52.72 & 3,193,231,538,989,185 & 3 · 5 · 17 · 23 · 113 · 167 · 2,927 · 9,857
\\ 
53.26 & 11,947,816,523,586,945 & 3 · 5 ·  17 · 23 · 89 · 113 · 233 · 617 · 1,409
\\
\hline
\end{tabular}
\end{table}

\section{Results}

The properties of Carmichael numbers were used to examine the proportion of Fermat witnesses to find a classification of Carmichael numbers $n$ with the property that the proportion of Fermat witnesses, $1-\dfrac{\phi(n)}{n-1}$ (approximated as $1-\dfrac{\phi(n)}{n}$ for larger values of $n$ in this paper), is less than $50\%.$ Furthermore, this paper provides a novel algorithm that  detects if a given composite number $n$ is Carmichael using observations made about the proportion of Fermat witnesses for Carmichael numbers. In addition, a scheme that combines this highly accurate test with a deterministic primality test is provided to determine if a given number is Carmichael.

\subsection{Classification of Carmichael Numbers $n$ with $1-\dfrac{\phi(n)}{n-1}<50\%$}

Let $ n $ be a Carmichael number such that $ n=p_1p_2\cdots p_r $ and $ p_i $ are all distinct prime factors of $ n$ (it is possible to express a Carmichael number as the product of distinct prime factors by the definition provided in Section 2.3). Let $ a\leq p_1<p_2<\cdots<p_r$. This section focuses on bounding the value of $a$ for which $n$ is guaranteed to be a Carmichael number with $1-\dfrac{\phi(n)}{n-1}<50\%$.

Because there are $ r $ prime factors of $ n $, $ a^r \leq n. $ Using this inequality yields the following: $$ r\log{a}\leq \log {n} $$ $$ r\leq \log_a{n}. $$So, it follows that: $$ \dfrac{1}{a}\geq \dfrac{1}{p_1} $$ $$\left(1-\dfrac{1}{a}\right)^{\log_a{n}}\leq \left(1-\dfrac{1}{a}\right)^{r} \leq \dfrac{\phi(n)}{n}. $$ The last inequality results from the fact that $a$ is less than every prime factor of $n,$ which has $r$ prime factors. Note that $\phi(n)=n\cdot \left(1-\dfrac{1}{p_1}\right)\cdot \left(1-\dfrac{1}{p_2}\right)\cdots \left(1-\dfrac{1}{p_r}\right)\geq \left(1-\dfrac{1}{a}\right)^r$. Furthermore, $\left(1-\dfrac{1}{a}\right)^{\log_a{n}}\leq \dfrac{\phi(n)}{n}$. 

It was observed that many Carmichael numbers have proportions of Fermat witnesses of less than $ 50\%. $ To characterize some Carmichael numbers that exhibit this property, it must now be checked when the following occurs: $$ \dfrac{1}{2}\leq\left(1-\dfrac{1}{a}\right)^{\log_a{n}}\leq \left(1-\dfrac{1}{a}\right)^{r} \leq \dfrac{\phi(n)}{n} $$ $$ \dfrac{1}{2}\leq \left(1-\dfrac{1}{a}\right)^{\log_a{n}} $$ $$ \dfrac{1}{2}\leq \left(\dfrac{a-1}{a}\right)^{\log_a{n}}. $$ Note that $ \log_a n=\dfrac{\log_{(a-1)/a}n}{\log_{(a-1)/a}a},$  which implies that: $$ \dfrac{1}{2}\leq \left(\dfrac{a-1}{a}\right)^{(\log_{(a-1)/a}n)/(\log_{(a-1)/a} a)}=n^{\log_{a}{a-1/a}}. $$ 
Taking the $\log$ of both sides results in: $$ \log \dfrac{1}{2}\leq \left(\log_a{\dfrac{a-1}{a}}\right)\cdot(\log n) $$ $$ \log_n {\dfrac{1}{2}}\leq \log_a{\dfrac{a-1}{a}}. $$ Let $ k=\log_n{\dfrac{1}{2}}. $ Note that $ k\leq\log_{a}{\dfrac{a-1}{a}} $, which implies that $ a^k\leq \dfrac{a-1}{a}. $ Multiplying both sides by $ a $ yields $ a^{k+1} \leq a-1. $ Thus, $ a^{k+1}-a+1\leq 0$. Now, it remains to find the values of $a$ for which $a^{k+1}-a+1\leq 0$.
\\

Let $f(a)=a^{k+1}-a+1$. Figure \ref{figure:a} shows $f(a)$ for the case when $n=1729$. To find the values of $a$ for which $f(a)\leq0$, the zero of $f(a)$ must be calculated. Theorem \ref{one} focuses on this calculation, which results in a classification of Carmichael numbers with a proportion of Fermat witnesses of less than $50\%$. 

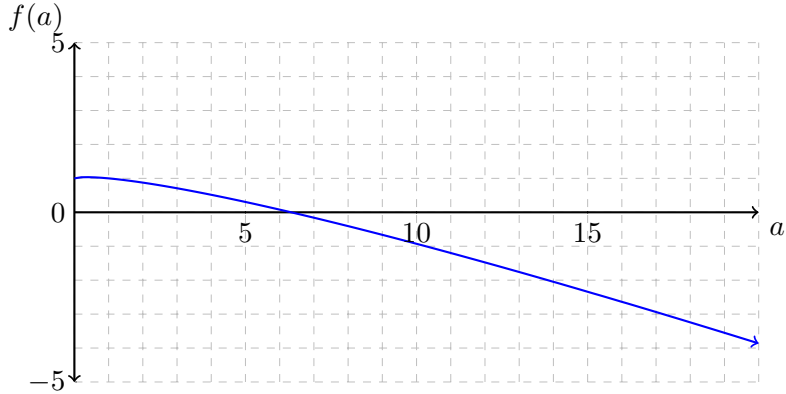
\begin{figure}[H]
\centering
\begin{tikzpicture}[scale=0.45]

\draw [help lines, dashed, color=gray!50, ultra thin] (0,-5) grid [step=1] (20,5);
\draw[thick, ->] (0,0) -- (20,0) node[anchor=north west] {$a$};
\draw[thick, <->] (0,-5) -- (0,5) node[anchor=south east] {$f(a)$};
\draw[thick, blue,->] plot [domain=0:20,samples=100] (\x,{\x^(ln(864.5)/ln(1729))-\x+1});
\foreach \x in {5, 10, 15}
	\draw[thin] (\x, 1pt) -- (\x, -1pt) node[anchor = north] {$\x$};
\foreach \y in {-5,0,5}
	\draw (1pt, \y) -- (1pt,\y) node[anchor=east] {$\y$};
\end{tikzpicture}
\caption{This graph shows the function $f(a)=a^{\log_{n}{n/2}}-a+1$ for $n=1729$.\label{figure:a}}
\end{figure}

\begin{theorem}
\label{one}
If the smallest prime factor $p_1$ of a Carmichael number $n$ satisfies the following: $$ 1+\log_2{n}-\left(\dfrac{(1+\log_2{n})^{\log_n{n/2}}-\log_2{n}}{(\log_n{n/2})\cdot (1+\log_2{n})^{\log_n{1/2}}-1}\right) \leq p_1, $$
then the proportion of numbers from $1$ to $n-1$ that are Fermat witnesses is less than $50\%.$

\end{theorem}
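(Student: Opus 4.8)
The plan is to feed the choice $a=p_1$ into the chain of inequalities assembled just before the theorem. This choice is legitimate: since $p_1^{\,r}\le p_1p_2\cdots p_r=n$ we have $\log_{p_1}n\ge r$, and since $1-\tfrac1{p_i}\ge 1-\tfrac1{p_1}$ for every $i$ we get $\phi(n)/n\ge(1-\tfrac1{p_1})^{r}\ge(1-\tfrac1{p_1})^{\log_{p_1}n}$. Hence it suffices to show $(1-\tfrac1{p_1})^{\log_{p_1}n}\ge\tfrac12$, for then $\phi(n)/n\ge\tfrac12$ and therefore $1-\phi(n)/(n-1)<\tfrac12$ (strictly, because $n-1<n$ and $\phi(n)>0$). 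Every manipulation in the displayed derivation is an equivalence — each step is either an algebraic rewrite, an application of the increasing function $\log$, division by $\log n>0$, application of the increasing function $x\mapsto p_1^{\,x}$, or multiplication by $p_1>0$ — so the desired inequality is equivalent to $f(p_1)\le 0$, where $f(a)=a^{k+1}-a+1$, $k=\log_n\tfrac12$, and hence $k+1=\log_n\tfrac n2\in(0,1)$ (using $n>2$, which holds for all Carmichael numbers).

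Next I would pin down the shape of $f$ on $[1,\infty)$. One checks $f(1)=1>0$; $f'(a)=(k+1)a^k-1<0$ for $a\ge1$ (since $k<0$ gives $a^k\le1$, and $0<k+1<1$ gives $(k+1)a^k\le k+1<1$); $f''(a)=(k+1)k\,a^{k-1}<0$; and $f(a)=a(a^k-1)+1\to-\infty$ as $a\to\infty$ because $k+1<1$. So $f$ is strictly decreasing and concave on $[1,\infty)$ with a unique zero $a_0>1$, and $f(a)\le 0\iff a\ge a_0$. The theorem has thereby been reduced to the single inequality $p_1\ge a_0$, and it is enough to produce a \emph{computable upper bound} for $a_0$ that is $\le p_1$.

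That bound is exactly one Newton step for $f$ from the initial point $a_1:=1+\log_2 n$. Writing $a_2:=a_1-f(a_1)/f'(a_1)$ and simplifying with $a_1-1=\log_2 n$, $k+1=\log_n(n/2)$, $k=\log_n(1/2)$ reproduces precisely the left-hand side of the theorem's inequality; note $f'(a_1)\ne0$ since $f'<0$ throughout $[1,\infty)$ and $a_1\ge 2$. It remains to prove $a_2\ge a_0$, and here I would argue purely from concavity rather than from any convergence theorem: the tangent line $L(x)=f(a_1)+f'(a_1)(x-a_1)$ satisfies $f(x)\le L(x)$ for all $x>0$, so $0=f(a_0)\le L(a_0)$; since $f'(a_1)<0$, $L$ is a strictly decreasing affine function with $L(a_2)=0$, and $L(a_0)\ge0=L(a_2)$ then forces $a_0\le a_2$. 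This is insensitive to whether $a_1$ lies left or right of $a_0$ — a single Newton step from anywhere in the domain lands at or beyond the root on the side where $f\le0$. Combining: $p_1\ge a_2\ge a_0\Rightarrow f(p_1)\le 0\Rightarrow \phi(n)/n\ge\tfrac12\Rightarrow 1-\phi(n)/(n-1)<\tfrac12$.

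The main obstacle is the step $a_2\ge a_0$: the temptation is to invoke Newton's method as an iterative approximation (which would only give $a_2\approx a_0$), whereas what is actually needed is the one-sided overshoot bound that concavity supplies in a single step, together with the bookkeeping that makes it apply — $a_1$ in the domain, $f$ concave with $f'$ nonvanishing there, and genuine reversibility of the pre-theorem chain. A secondary point to verify carefully is the algebraic identification of $a_2$ with the stated expression, in particular that after substituting $a_1=1+\log_2 n$ the exponents $\log_n(n/2)$ and $\log_n(1/2)$ land in the numerator and denominator exactly as written.
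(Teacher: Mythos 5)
Your proposal is correct and follows essentially the same route as the paper: reduce the claim to $f(p_1)\le 0$ for $f(a)=a^{k+1}-a+1$, $k=\log_n\tfrac12$, and use concavity so that the tangent-line (Newton) step from $1+\log_2 n$ lands at or beyond the unique root, which is exactly the displayed expression. You merely make explicit some details the paper leaves implicit (monotonicity of $f$ on $[1,\infty)$, the one-sided overshoot argument, and the exact handling of $\phi(n)/(n-1)$ versus $\phi(n)/n$), which strengthens rather than changes the argument.
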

\begin{proof} To find a bound for the zero of $f(a)=a^{k+1}-a+1,$ it suffices to use Newton's method to approximate a lower bound for the smallest prime factor $p_1$ of $n.$ This method begins with a function $f(x)$ defined over the real numbers such that the derivative of $f(x)$ exists and is defined over all reals. An initial guess $x_0$ is made to approximate the root of the function. A new approximation $x_1$ is made using the following equation: $$ x_1=x_0 -\dfrac{f(x_0)}{f'(x_0)}. $$ This process of approximating the roots of the function $f(x)$ continues with: $$ x_{n+1}=x_n - \dfrac{f(x_n)}{f'(x_n)}. $$

Note that the tangents to the function $f(a)=a^{k+1}-a+1$ have $x-$intercepts that are greater than the zero of $f(a)$ because $f(a)$ is a concave function. Thus, if the approximation of the zero of $f(a)$ is less than $p_1$, then the zero of $f(a)$ is less than $p_1$, which implies that the proportion of Fermat witness is less than 50\% for the Carmichael number.

To first approximate the zero of $f(a),$ let $x_0=1.$ Note that $f'(a)=(k+1)\cdot a^k-1,$ which means that $f'(1)=(k+1)\cdot 1 -1=k.$ Also, note that $f(1)=1^{k+1}-1+1=1.$ Thus, $$x_1=1-\dfrac{f(1)}{f'(1)}=1-\dfrac{1}{k}=1+\dfrac{1}{\log_n{2}}=1+\log_2{n}.$$ 

Now, consider the second iteration of Newton's method. Note that: $$x_2=x_1-\dfrac{f(x_1)}{f'(x_1)}=(1+\log_2{n})-\dfrac{f(1+\log_2{n})}{f'(1+\log_2{n})}.$$ 
Furthermore, the numerator of $\dfrac{f(1+\log_2{n})}{f'(1+\log_2{n})}$ can be rewritten as: $$f(1+\log_2{n})=(1+\log_2{n})^{\log_n{(n/2)}}-(1+\log_2{n})+1=(1+\log_2{n})^{\log_n{(n/2)}}-\log_2{n}.$$
The derivative of $f(a)$ evaluated at $a=1+\log_2{n}$ is given by: $$f'(1+\log_2{n})=\left(1+\log_n{(1/2)}\right)\cdot (1+\log_2{n})^{\log_n{(1/2)}}-1.$$
Thus, if the following is true: $$ a<1+\log_2{n}-\left(\dfrac{(1+\log_2{n})^{\log_n{(n/2)}}-\log_2{n}}{(\log_n{(n/2)})\cdot (1+\log_2{n})^{\log_n{(1/2)}}-1}\right)\leq p_1, $$ then the proportion of Fermat witnesses for the Carmichael number $n$ is less than $50\%,$ as desired.
\begin{flushright}
$\square$
\end{flushright}
\end{proof}

\textbf{Theorem \ref{one}} exploits an interesting observation about Carmichael numbers: the proportion of Fermat witnesses for many Carmichael numbers is less than $50\%.$ This property is quite fascinating because every composite number with non-trivial Fermat witnesses has a proportion of Fermat witnesses of greater than $50\%.$ This key observation can be further utilized to create an algorithm that distinguishes between Carmichael numbers and other composite numbers. 
\subsection{Algorithm that Distinguishes Carmichael Numbers and Other Composite Numbers}

This section provides the details for the probabilistic algorithm that determines if a composite number is Carmichael. 

The algorithm works as follows. Consider a composite number $n.$ Conduct a Monte Carlo simulation by first randomly selecting $t$ numbers from the set $\{1, 2,\ldots, n-1\},$ where $t=\lfloor{(\ln n)^2\rfloor}$. Note that $\lfloor{(\ln n)^2\rfloor}$ is the sample size temporarily because $\lfloor{\ln n\rfloor}$ is quite small for larger values of $n$ and the variation would be quite significant with a smaller sample size. For larger numbers, a sample size of $\lfloor{(\ln n)^2\rfloor}$ yields more accurate results. Now, check for each such $a$ from the randomly sample if $a^{n-1}\equiv 1\pmod{n}$. Next, calculate the proportion of values of $a$ for which $a^{n-1}\not\equiv 1\pmod{n}$ from the random sample. If the proportion of such numbers is less than $45\%$ \footnote{Note that there are other composite numbers for which the proportions of Fermat witnesses are close to $50\%.$ Such numbers would be incorrectly determined to be Carmichael because of sampling variations.}, then the composite number $n$ is ``probably'' Carmichael. Otherwise, check every instance in which $a^{n-1}\not\equiv 1\pmod{n}$ and check if $\gcd(a,n)=1.$ If $\gcd(a,n)=1$, then the number $n$ is declared as an ``other composite number.'' If there are no such $a$ relatively prime to $n$, then the number $n$ is Carmichael with a high accuracy. 

The pseudocode for this algorithm is detailed in Algorithm \ref{algorithm:one}.
\renewcommand{\arraystretch}{0.6}
\\
\begin{algorithm}[H]
\setstretch{0.8}
\caption{Determine if a Composite Number is a Carmichael\label{algorithm:one}}
\begin{algorithmic}[1]
\Procedure{CarmichaelDetection}{}
\State $n \gets \text{composite number}$
\State $t \gets \text{size}$
\State $\textit{sample} \gets \text{randomly chosen numbers from $1$ to $(n-1)$} $
\State $\text{indicator} \gets 1 \text{ if Fermat witness, else 0}$
\State $\textit{sample(i)} \gets \textit{$i^{th}$ sample}$
\State $k \gets \text{number of non-trivial Fermat witnesses}$
\BState \textit{t = $floor((\ln n)^2)$}
\BState \textit{randomsample(t,n)}
\BState \emph{loop}:
\If {$(sample(i))^{n-1} \equiv 1 \pmod{n}$} 
\State $indicator.append(0)$
\EndIf
\State $ \textbf{else } indicator.append(1)$
\BState \textbf{if} \textit{ sum(indicator)}\text{ < 45\%:}
\Return \text{Carmichael} 
\BState \textbf{else} 
\State \emph{loop:} 
\State \textbf{if } $\gcd(sample(i), n) == 1$ \text{ and } $indicator[i]==1:$ 
\Return \text{Other composite}
\State \textbf{break}
\State \textbf{else if } $i == t-1:$ 
\Return \text{Carmichael}
\BState \textbf{close}
\EndProcedure
\end{algorithmic}
\end{algorithm}

\subsection{Proof of Correctness for Algorithm 1}

If a number $n$ is Carmichael, then $n$ must have no non-trivial Fermat witnesses. Thus, a Carmichael number $n$ will be accurately determined as Carmichael. Otherwise, other composite numbers, which must have non-trivial Fermat witnesses, will be correctly determined as ``other composite numbers'' with a certain high probability, as described in Section 3.4.

\subsection{Justification of Algorithm 1}

To show that Algorithm \ref{algorithm:one} works with high accuracy, one must consider the probability that a number is Carmichael given that the number is composite and has no non-trivial Fermat witnesses for a random sample of $t=\lfloor{(\ln n)^2\rfloor}$ integers from $1$ to $n-1.$ The proof of this algorithm requires Bayes' rule in conditional probability.

Let $X$ be the random variable for the event that a 1024-bit integer $n$ is Carmichael. Let $Y_t$ be the random variable for the event that either the proportion of Fermat witnesses is less than $45\% $ for the random sample of size $t$ or no non-trivial Fermat witnesses are found after checking if $a^{n-1} \equiv 1\pmod{n}$ for each element of the random sample. Also, let $Z$ be the event that a 1024-bit integer $n$ is composite. The desired probability is equivalent to $Pr(X|(Y_t \cap Z)).$

Recall that Bayes' rule states that: $$ Pr(X|(Y_t \cap Z)) =\dfrac{Pr((Y_t \cap Z)|X)\cdot Pr(X)}{Pr((Y_t\cap Z)|X)\cdot Pr(X) +Pr((Y_t\cap Z)|X')\cdot Pr(X')}. $$
Note that $X'$ refers to the event that $n$ is not Carmichael. 

First, consider the numerator of the probability described above. Note that $Pr((Y_t\cap Z)|X)=1$ because if a number is Carmichael, then $Z$ must be true because all Carmichael numbers are composite numbers and $Y_t$ must be true because Carmichael numbers have no non-trivial Fermat witnesses. Thus, the numerator is equal to $Pr(X)$. Finding the probability that a given 1024-bit integer (a common size of the prime numbers chosen for the RSA cryptosystem) is Carmichael is equivalent to finding the proportion of 1024-bit integers that are Carmichael numbers. The probability $Pr(X)$ can also be expressed as $\dfrac{C(2^{1024})-C(2^{1023})}{2^{1023}},$ where $C(n)$ is a function of $n$ that denotes the number of Carmichael numbers less than a number $n$. It has been found that $$C(n) = n\cdot \exp \left(-k(n)\cdot \dfrac{\log n \log\log\log n}{\log \log n}\right)$$ for some function $k(n)$ defined over $\mathbb{R}$ \cite{Pinch2008TheCN}. Note that $\dfrac{C(n)}{n}$ has been shown to be approximately $\dfrac{n^{0.34}}{n}$ for larger values of $n$. Thus, the numerator can be expressed as $Pr(X)=\dfrac{(2^{1024})^{0.34}-(2^{1023})^{0.34}}{2^{1023}}$.

Consider the denominator of the probability of accuracy for Algorithm \ref{algorithm:one}: $$ Pr((Y_t\cap Z)|X)\cdot Pr(X) + Pr((Y_t\cap Z)|X')\cdot Pr(X'). $$ Note that $Pr((Y_t\cap Z)|X)\cdot Pr(X)$ is equal to the numerator, which is simply $Pr(X)$. Now, consider  the term $ Pr((Y_t\cap Z)|X')\cdot Pr(X')$. Recall that $Pr(X')$ denotes the probability that $n$ is not Carmichael. Because $X'$ is the random variable for the event that $n$ is not Carmichael, $Pr((Y_t\cap Z)|X')$ is the probability that either the proportion of Fermat witnesses is less than $45\%$ for the random sample or no non-trivial Fermat witnesses are found and $n$ is composite, given that the number is not Carmichael. Note that: $$Pr((Y_t\cap Z)|X')=p + (1-p)\cdot \left[\left(1-\dfrac{|B|}{n}\right)^t - \left(\dfrac{|A|}{n}\right)^t\right],$$ where $p$ is the probability that less than $45\%$ of the random sample are Fermat witnesses given that $n$ is not Carmichael. Also, recall that $B$ denotes the set of non-trivial Fermat witness and $A$ denotes the set of all Fermat non-witnesses. 

The expression for $Pr((Y_t\cap Z)|X')$ provided in the previous paragraph can be explained by the intuition behind Algorithm~\ref{algorithm:one}. In this algorithm, Carmichael numbers are first detected based on whether or not the proportion of Fermat witnesses is less than $45\%$. If the proportion of Fermat witnesses is greater than or equal to $45\%,$ then the algorithm checks if there are any non-trivial Fermat witnesses. Similarly, in calculating the probability $Pr((Y_t\cap Z)|X'),$ one must first account for the event that the proportion of Fermat witnesses is less than $45\%$ for the sample. This first part is denoted by $p$, as defined earlier. Otherwise, if the proportion of Fermat witnesses for the sample is greater than or equal to $45\%$, then the probability is given by $(1-p)\cdot \left[\left(1-\dfrac{|B|}{n}\right)^t - \left(\dfrac{|A|}{n}\right)^t\right].$ This is because the probability that the proportion of Fermat witnesses for the sample is greater than or equal to $45\%$ is $(1-p)$ and the probability that there are no non-trivial Fermat witnesses but there are some trivial Fermat witnesses found in the sample is $\left[\left(1-\dfrac{|B|}{n}\right)^t - \left(\dfrac{|A|}{n}\right)^t\right]$ (in the case that there are no Fermat witnesses found, the number $n$ could be prime, which would violate the event $X$).

The equivalent expression for $Pr((Y_t\cap Z)|X')$ described earlier can be evaluated by first approximating the value of $p.$ The distribution of proportions of Fermat witnesses for the random samples is a binomial distribution with an average value of $1-\dfrac{|A|}{n}$ because $A$ denotes the set of all Fermat non-witnesses. Because the proportions of Fermat witnesses from random samples follow a binomial distribution, the standard deviation is given by $ \sigma =\sqrt{\dfrac{1}{t}\left(\dfrac{|A|}{n}\right)\cdot \left(1-\dfrac{|A|}{n}\right)}. $ Since the RSA cryptosystem selects two large prime factors (of about 300 digits), the binomial distribution can be approximated by the probability density function, which describes a normal model (see Figure \ref{figure:bell}).

\pgfmathdeclarefunction{gauss}{2}{%
  \pgfmathparse{1/(#2*sqrt(2*pi))*exp(-((x-#1)^2)/(2*#2^2))}%
}

\pgfplotsset{tick label style={color=white},
  label style={font=\small},
  legend style={font=\small}
 }

\begin{figure}[H]
\centering
\begin{tikzpicture}[scale=1]
\begin{axis}[
  no markers, domain=0.1:0.4, samples=414,
  axis lines*=left, xlabel=$x$, ylabel=$y$,
  every axis y label/.style={at=(current axis.above origin),anchor=south},
  every axis x label/.style={at=(current axis.right of origin),anchor=west},
  height=5cm, width=12cm,
  xtick={0.246}, ytick=\empty,
  enlargelimits=false, clip=false, axis on top,
  grid = major
  ]
  \addplot [very thick,cyan!50!black] {gauss(0.246,0.021156)};
  
\end{axis}

\node at (5.1,-0.6) {$ 1-\dfrac{|A|}{n} $};

\node at (0.8,-0.6) {\tiny $ \sigma =\sqrt{\dfrac{1}{t}\left(\dfrac{|A|}{n}\right)\cdot \left(1-\dfrac{|A|}{n}\right)} $};
\end{tikzpicture}
\caption{Distribution of Fermat witnesses for the Random Sample from $ (\mathbb{Z}/n\mathbb{Z})\backslash\{0\} $\label{figure:bell}}
\end{figure}
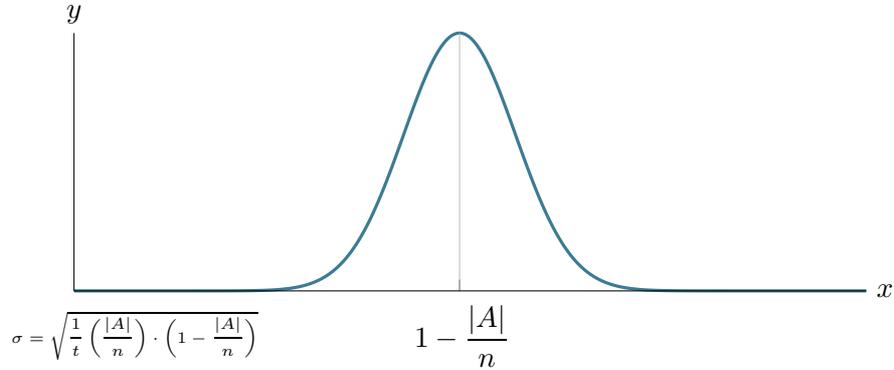

It suffices to find an approximate value of $p,$ which may be found by approximating the value of $\sigma$ and finding the value of $x$ for which the lower values of $x$ represent the event that the proportion of Fermat witnesses is less than $45\%$ for the random sample found by a Monte Carlo simulation. Recall that the proportion of Fermat witnesses for all composite numbers with non-trivial Fermat witnesses is greater than $50\%.$ In other words, $\dfrac{|A|}{n}<\dfrac{1}{2}, $ which implies that $ \left(1-\dfrac{|A|}{n}\right)>\dfrac{1}{2},$ because there exists at least one non-trivial Fermat witness when determining $Pr((Y_t\cup Z)|X)$. To prove that Algorithm \ref{algorithm:one} works for approximately $100\%$ of the time, it suffices to show that when the $1-\dfrac{|A|}{n}=\dfrac{1}{2}$ this accuracy still holds.\footnote{Note that it follows from Lagrange's theorem that the group of all Fermat non-witnesses divides the order of the group $(\mathbb{Z}/n\mathbb{Z})^{\times}$. The least proportion of Fermat witnesses for a number $n$ with non-trivial Fermat witnesses is $1-\dfrac{\phi(n)}{2n}$ because of numbers such as $91$ that can be expressed as $q\cdot (2q-1),$ where $q$ and $2q-1$ are prime. For 91, $q=7.$} 

To find the probability $p,$ one must calculate the number of standard deviations $x=0.45$ is from the mean of $\dfrac{1}{2}$ (this value is also referred to as a $z-$score or standard score): $$ z=\dfrac{0.45-\dfrac{1}{2}}{\sqrt{\dfrac{1}{t}\left(\dfrac{1}{2}\right)\cdot \left(\dfrac{1}{2}\right)}}.$$

Recall that $p$ is the probability that less than $45\%$ of the random sample are Fermat witnesses given that the number $n$ is not Carmichael. The value of $p$ is also equal to the area under the probability density function from $-\infty$ to $z.$ This area can be calculated using the cumulative distribution function, $F(x):$ 



$$ F(z)=\dfrac{1}{\sqrt{2\pi}}\int_{-\infty}^{z}e^{-t^2/2}dt, $$ where $z$ is the standard score.  

For the calculation of the value of $p$ using the cumulative distribution function, a program in \textit{Mathematica} can be used to approximate the value of $z,$ which can then be used to evaluate $F(z).$ To calculate the probability that Algorithm \ref{algorithm:one} works correctly, one may use the approximate size ($\approx 10^{300}$) of the prime numbers used in the RSA cryptosystem to approximate the value of $n.$ In particular, the calculation of the probability depends only on the size of the number $n$ and not on actual prime factors of the number $n$. The calculated probability $p$ yields a probability of approximately $100\%.$ 

\subsection{Efficiency of Algorithm 1}

Using the Algorithm \ref{algorithm:one} implementation and the Algorithm \ref{algorithm:one} pseudocode, it can be calculated that Algorithm \ref{algorithm:one} has a time complexity of $O(t(\log n)^3)$, where $t$ is the sample size. The $(\log n)^3$ represents time needed for determining the greatest common divisor of an element of the sample and $n$ using the Euclidean algorithm. Although this algorithm maintains both high efficiency and high accuracy, Algorithm \ref{algorithm:one} may not be compared to previous primality testing algorithms or previous Carmichael detecting algorithms because it relies on the fact that the number $n$ is composite. Thus, to compare this algorithm with existing algorithms, modifications must be made in a way that Carmichael numbers are detected among not just composite numbers but all numbers.

\subsection{Algorithm 1 Modifications: Detecting Carmichael Numbers}

In Section 3.2, a novel algorithm for distinguishing Carmichael numbers and other composite numbers was described. This algorithm combined the properties of the Fermat witnesses for Carmichael numbers and other fundamental properties. This section exploits the aforementioned scheme to show a new algorithm that allows for the detection of Carmichael numbers and not just the separation between Carmichael numbers and other composite numbers.

Instead of differentiating between Carmichael numbers and other composite numbers, one may modify the algorithm so that it could differentiate between the set of both Carmichael numbers and prime numbers and the set of all other composite numbers. This modification allows for a deterministic (or almost deterministic) primality test to check all of the numbers in the set of all Carmichael numbers and prime numbers, which is much smaller to check than the set of all integers. 

\subsection{Proof of Correctness for Modified Algorithm}

If a number $n$ is Carmichael or prime, then $n$ must have no non-trivial Fermat witnesses. Otherwise, other composite numbers, which must have non-trivial Fermat witnesses, will be correctly determined to be ``other composite numbers'' with a certain high probability, as described in Section 3.8. Furthermore, a highly accurate primality test that has been proven for correctness will correctly distinguish between Carmichael numbers and prime numbers.

\subsection{Justification of the Modified Algorithm}

This section provides a proof for the high accuracy of the modified algorithm. The proof detailed in this section uses similar notions as those used in Section 3.4. However, the random variable for the event that the number is composite will not be of use in this proof that justifies the distinction of Carmichael numbers among all other integers. 

Let $X$ be the random variable for the event that a 1024-bit integer is either Carmichael or prime. Also, let $Y_t$ represent the random variable for the event that after random sampling $t=\lfloor{(\ln n)^2\rfloor}$ times, either the proportion of Fermat witnesses for a number $n$ is less than $45\%$ or there are no non-trivial Fermat witnesses. The probability that must be calculated is as follows: $$ Pr(X|Y_t)=\dfrac{Pr(Y_t|X)\cdot Pr(X)}{Pr(Y_t|X)\cdot Pr(X)+Pr(Y_t|X')\cdot Pr(X')}. $$

Note that $Pr(Y_t|X)=1$ because Carmichael numbers and prime numbers have no non-trivial Fermat witnesses. So, the numerator is equal to $Pr(X),$ which is the probability that a randomly chosen number is Carmichael or composite. Calculating this probability is the same as calculating the proportion of numbers less than a number $n$ that are Carmichael or prime. As detailed in Section 3.4, the proportion of numbers that are Carmichael is approximately $\dfrac{n^{0.34}}{n}.$ The proportion of numbers less than $n$ that are prime is approximately $\dfrac{1}{\ln n},$ which is a result of the prime number theorem. Thus, accounting for the size of the prime numbers used in the RSA cryptosystem, it may be calculated that $Pr(X)=\dfrac{(2^{1024})^{0.34}-(2^{1023})^{0.34}}{2^{1023}}+\dfrac{\dfrac{2^{1024}}{\ln 2^{1024}}-\dfrac{2^{1023}}{\ln 2^{1023}}}{2^{1023}}.$

For the denominator, $Pr(Y_t|X)\cdot Pr(X)$ is the same as the numerator. Now, consider the term $Pr(Y_t|X')\cdot Pr(X').$ The left term, $Pr(Y_t|X'),$ represents the probability that a number that is neither Carmichael nor prime has either a proportion of Fermat witnesses that is less than $45\%$ or no non-trivial Fermat witnesses. This probability is exactly the same as $Pr(Y_t\cap Z|X') = p + (1-p)\cdot \left[\left(1-\dfrac{|B|}{n}\right)^t - \left(\dfrac{|A|}{n}\right)^t\right],$ as shown in Section 3.4. Thus, the probability $Pr(X|Y_t)$ is equal to: $$ Pr(X|Y_t)=\dfrac{Pr(X)}{Pr(X) + (1-Pr(X))\cdot \left[p + (1-p)\cdot \left[\left(1-\dfrac{|B|}{n}\right)^t - \left(\dfrac{|A|}{n}\right)^t\right]\right]}, $$ which can be evaluated using \textit{Mathematica} to approximate the probability using large numbers for $n.$ Thus, the probability of accuracy of the modified algorithm is approximately $100\%.$

\subsection{Efficiency of the Modified Algorithm}

The modified algorithm is useful for finding a list of Carmichael numbers less than or equal to $n$. Suppose that the algorithm runs for the first $n$ numbers. Then, the time complexity of the modified algorithm is $O\left(nt(\log n)^3 + \left(\dfrac{n}{\log n}+C(n)\right)\cdot x \right),$ where $x$ is the run-time of a deterministic primality test that is combined with Algorithm~\ref{algorithm:one}. Note that $ \left(\dfrac{n}{\log n}+C(n)\right)\cdot x $ represents the time needed for the modified part of the algorithm. Although this modified algorithm is efficient for the purposes of determining a list of Carmichael numbers, the efficiency could be optimized by finding a value of $t$ for which the accuracy is still maintained.

\section{Conclusions and Future Extensions}

This paper determined both a classification of Carmichael numbers and a method for detecting Carmichael numbers, pseudoprimes to several primality tests. To further the research in this paper, one may examine the proportion of Fermat witnesses to find the percentage of Carmichael numbers with a proportion of Fermat witnesses of less than $50\%.$ These findings may be used to modify the upper bound for which the proportion of Fermat witnesses is checked in Algorithm \ref{algorithm:one}. Furthermore, the algorithm may be modified with an efficient deterministic primality test. Moreover, the value of $t$ must be modified to improve the efficiency of the algorithm. To extend the idea of detecting pseudoprimes, one may examine either the proportion of witnesses for false positives of other primality tests that have many false positives. 

\section{Acknowledgments}

The author wishes to thank his mentor, Hyun Jong Kim, for his guidance throughout this project. The author would also like to thank Dr.~Tanya Khovanova for helping to edit this paper and the MIT PRIMES program for making this research possible. 

\clearpage
\bibliographystyle{apacite}
\bibliography{references.bib}





\end{document}